\documentclass[11pt,reqno]{amsart}
\usepackage{amsmath,amstext,amssymb,amscd}
\usepackage{verbatim}
\usepackage{enumerate}
\usepackage{mathrsfs}
\usepackage[dvipsnames,usenames]{xcolor}

\usepackage{amsthm}

\newtheorem{theorem}{Theorem}[section]
\newtheorem{lemma}[theorem]{Lemma}

\newtheorem{proposition}[theorem]{Proposition}

\theoremstyle{definition}
\newtheorem{definition}[theorem]{Definition}

\theoremstyle{remark}
\newtheorem{remark}[theorem]{Remark}

\numberwithin{equation}{section}

\setlength{\oddsidemargin}{.25in} \setlength{\evensidemargin}{.25in}
\setlength{\textwidth}{6in}

\begin{document}

\newcommand{\sgn}{\operatorname{sgn}}

\def\a{\alpha}
\def\b{\beta}
\def\d{\delta}
\def\g{\gamma}
\def\l{\lambda}
\def\o{\omega}
\def\s{\sigma}
\def\t{\tau}
\def\th{\theta}
\def\r{\rho}
\def\D{\Delta}
\def\G{\Gamma}
\def\O{\Omega}
\def\e{\varepsilon}
\def\p{\phi}
\def\P{\Phi}
\def\S{\Psi}
\def\E{\eta}
\def\m{\mu}
\def\grad{\nabla}
\def\bar{\overline}
\newcommand{\reals}{\mathbb{R}}
\newcommand{\naturals}{\mathbb{N}}
\newcommand{\ints}{\mathbb{Z}}
\newcommand{\complex}{\mathbb{C}}
\newcommand{\rationals}{\mathbb{Q}}
\newcommand{\innerprod}[1]{\left\langle#1\right\rangle}
\newcommand{\norm}[1]{\left\|#1\right\|}
\newcommand{\abs}[1]{\left|#1\right|}

\author[Y. Guo]{Yanqiu Guo}
\address{Department of Mathematics \& Statistics \\Florida International University\\
Miami, Florida 33199, USA} \email{yanguo@fiu.edu}

\author[M. Ilyin]{Michael Ilyin}
\address{Department of Mathematics \& Statistics \\Florida International University\\
Miami, Florida 33199, USA} \email{milyi001@fiu.edu}

\keywords{large gaps between sums of squares, integer lattice points, sparse distribution, quadratic forms}
\subjclass[2020]{11A99}

\title[Sparse distribution of lattice points in annular regions]
{Sparse distribution of lattice points in annular regions}

\date{May 3, 2024}

\maketitle

\begin{abstract}
This paper is inspired by Richards' work on large gaps between sums of two squares \cite{Richards}.
It is shown in \cite{Richards} that there exist arbitrarily large values of $\lambda$ and $\mu$, where $\mu \geq C \log \lambda$, such that intervals $[\lambda, \,\lambda + \mu ]$ do not contain any sums of two squares. 
Geometrically, these gaps between sums of two squares correspond to annuli in $\mathbb R^2$ that do not contain any integer lattice points. A major objective of this paper is to investigate the sparse distribution of integer lattice points within annular regions in $\mathbb R^2$.
Specifically, we establish the existence of annuli $\{x\in \mathbb R^2: \lambda \leq |x|^2 \leq \lambda + \kappa\}$ with arbitrarily large $\lambda$ and $\kappa \geq C \lambda^s$ for $0<s<\frac{1}{4}$, satisfying that any two integer lattice points within any one of these annuli must be sufficiently far apart. This result is sharp, as such a property ceases to hold at and beyond the threshold $s=\frac{1}{4}$. Furthermore, we extend our analysis to include the sparse distribution of lattice points in spherical shells in $\mathbb R^3$.

\end{abstract}

\section{Introduction}
This article is inspired by Richards' paper on large gaps between integers that can be expressed as sums of two squares \cite{Richards}.
It is proved in \cite{Richards} that large gaps between sums of two squares increase at least logarithmically as an asymptotic rate. 
More precisely, let $s_1$, $s_2, \cdots$ be the sequence, arranged in increasing order, of sums of two squares $x^2 + y^2$, then 
\begin{align} \label{loggap}
\limsup_{n\rightarrow \infty} \frac{s_{n+1} - s_n}{\log s_n} \geq C >0.
\end{align}
In \cite{Richards}, $C=\frac{1}{4}$; but this constant has been improved to $C \approx  0.87$ in \cite{Dietmann} by Dietmann et al. 
The logarithmic-type estimate (\ref{loggap}), established by Richards, represents an improvement over a result by Erd\"os \cite{Erdos}.

Note that, formula (\ref{loggap}) can be interpreted as stating that there exist arbitrarily large values of $n$ for which $s_{n+1}-s_n\geq \alpha \log s_n$, where $\alpha>0$.
In other words, the lower bound of large gaps between sums of two squares increases logarithmically.
Geometrically, these gaps between sums of two squares correspond to annular regions in $\mathbb R^2$ that contain no integer lattice points.
Therefore, Richards' result \cite{Richards} can be restated as follows: there exist arbitrarily large values of $\lambda$ and $\mu$, where $\mu \geq \alpha \log \lambda$,
such that intervals $[\lambda,\,\lambda+\mu]$ do not contain sums of two squares, meaning that there are no integer lattice points in annuli
$\{x\in \mathbb R^2:  \lambda \leq  |x|^2 \leq \lambda + \mu\}$.

On the other hand, there has been research in the literature regarding the upper bound of large gaps between sums of squares. Bambah and Chowla \cite{Bambah} proved that if $\beta > 2\sqrt{2}$, then for all large integers $k$, 
there are integers $u$ and $v$ such that $k \leq u^2 + v^2 < k + \beta k^{\frac{1}{4}}$.
This implies that gaps between sums of two squares have an upper bound of polynomial growth rate.
In particular, $s_{n+1} - s_n  \leq \beta s_n^{\frac{1}{4}}$ for sufficiently large $n$, where $\{s_n\}$ is the sequence of sum of two squares arranged in increasing order. 
Also, Shiu \cite{Shiu} provided a very short proof of Bambah-Chowla theorem. 

By combining the results from    \cite{Bambah} and \cite{Richards}, it can be concluded that large gaps between sums of two squares have both lower and upper bounds. Specifically, there exist arbitrarily large $n$ for which:
\begin{align} \label{bounds}
\alpha \log s_n \leq s_{n+1} - s_n \leq  \beta s_n^{\frac{1}{4}},
\end{align}
for some positive constants $\alpha$ and $\beta$.

Geometrically, gaps between sums of two squares correspond to annuli in $\mathbb R^2$ that contain no integer lattice points, 
and the size of the gap is related to the thickness of the annulus. Motivated by this geometric perspective on gaps between sums of two squares, our study focuses on the sparse distribution of integer lattice points within annular regions. 
Specifically, we aim to identify annuli in $\mathbb R^2$ where any two integer lattice points inside such an annulus are sufficiently distant from each other. 
We anticipate that an annulus containing sparsely distributed lattice points will have greater thickness than one with no lattice points. 
To formalize this, we prove that, for any large distance $d$, there exist arbitrarily large $\lambda$ and $\kappa \geq C \lambda^s$, where $0 < s < \frac{1}{4}$, 
satisfying the condition that any two integer lattice points belonging to the annulus $\{x\in \mathbb R^2: \lambda \leq |x|^2 \leq \lambda + \kappa\}$ must be separated by a distance greater than $d$.
It's essential to note that the polynomial growth rate of the interval's length, i.e., $\kappa \geq C \lambda^s$ with $0 < s < \frac{1}{4}$, is significantly larger than the logarithmic growth rate of the gaps between sums of squares. Our result is sharp in the sense that such a property of sparse lattice point distribution ceases to hold, at and beyond the threshold $s=\frac{1}{4}$.

We also consider three dimensions. There are no large gaps between sums of three squares. Due to the representation of sums of three squares (as seen in, e.g., \cite{Gross}), the gaps between sums of three squares can only be 1, 2 or 3. 
Consequently, if $[m, \,m + \delta]$ does not contain sums of three squares, then $0<\delta<3$.
In other words, if a spherical shell $\{x\in \mathbb R^3: m \leq |x|^2 \leq m+ \delta\}$ does not contain any integer lattice points,
then $0<\delta < 3$. This suggests that a spherical shell that contains no integer lattice points has small thickness. However, in this paper, we show that a spherical shell containing sparsely distributed lattice points can have a more substantial thickness. 
In particular, we establish that, for any large distance $d$, there exist arbitrarily large $m$ and $h \geq C_d \sqrt{\log m}$, satisfying the condition that any two integer lattice points belonging to the spherical shell $\{x\in \mathbb R^3: m \leq |x|^2 \leq m + h\}$ must be separated by a distance greater than $d$. Moreover, we prove that if $h$ reaches the order of $h \sim Cm^{1/8}$, then the property of sparse distribution of lattice points in spherical shells ceases to hold.

Large gaps between sums of squares and the sparse distribution of integer lattice points in annular regions have significant applications in the study of the long-term behavior of dissipative dynamical systems. One example of a dissipative dynamical system, modeled by nonlinear partial differential equations, is the reaction-diffusion equation:\begin{align} \label{reaction}
\partial_t u - \Delta u + f(u) = 0,
\end{align}
where $f(u)$ is a nonlinear term, such as $f(u) = u^3$. When studying this equation on a two-dimensional periodic physical domain, the solution $u$ can be represented as a Fourier series: $u(x,t) = \sum_{k\in \mathbb Z^2} \hat u (k, t)e^{i k \cdot x}$ for $x\in \mathbb T^2 = [0,2\pi]^2$, $t\geq 0$. In the Hilbert space $L^2(\mathbb T^2)$, the Laplace operator $-\Delta$ has eigenfunctions $\{e^{i k \cdot x}\}$ with corresponding eigenvalues $\{k_1^2 + k_2^2\}$, where $k=(k_1, k_2) \in \mathbb Z^2$.
It's important to note that these eigenvalues $\{k_1^2 + k_2^2\}$ are sums of two squares.

In general, a PDE can be considered as a system of infinitely many ODEs with infinitely many unknowns $\hat u (k, t)$, where $k\in \mathbb Z^2$. Some nonlinear dissipative PDEs can be reduced to a system of finitely many ODEs as $t\rightarrow \infty$. This type of finite-dimensional reduction often relies on the existence of large gaps between the eigenvalues of the Laplacian. In fact, large gaps between sums of two squares, as demonstrated in Richards' result \cite{Richards}, can lead to the finite-dimensional reduction for certain dissipative PDEs, such as the reaction-diffusion equation (\ref{reaction}) in a 2D periodic domain.

However, in some cases, large gaps between the eigenvalues of the Laplacian are not available. For instance, if one considers the Laplacian operator acting on functions defined in a 3D periodic domain, the eigenvalues are sums of three squares, which do not exhibit large gaps. In such scenarios, the sparse distribution of integer lattice points in spherical shells can be valuable in reducing a dissipative PDE to a finite-dimensional system at large times. For example, an important work by Mallet-Paret and Sell \cite{MS} shows a finite-dimensional simplification for 3D reaction-diffusion equations using the sparse distribution of lattice points in spherical shells.

\vspace{0.2 in}

\section{Statements of main results}

Our first result is concerned with the sparse distribution of lattice points in annuli in $\mathbb R^2$. 

\begin{theorem}  \label{thm2D}
Assume $0<s<\frac{1}{4}$. Given that $d \geq 1$ and $C>0$. There exist arbitrarily large $\lambda \in \mathbb R^+$ and $\kappa \geq C \lambda^s$, such that,  
any two lattice points $k,\ell \in \mathbb Z^2$ that belong to the annulus $\{x\in \mathbb R^2: \lambda \leq |x|^2 \leq \lambda + \kappa\}$ must satisfy $|k-\ell|>d$.
\end{theorem}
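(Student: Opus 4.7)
The plan is to prove Theorem \ref{thm2D} by a direct Lebesgue-measure counting argument. Fix a large $A$ and set $\kappa := 2^s C A^s$, so that $\kappa \geq C\lambda^s$ uniformly for every $\lambda \in [A, 2A]$. I will show that the set of \emph{bad} $\lambda \in [A, 2A]$---those $\lambda$ for which the annulus $\{x \in \reals^2 : \lambda \leq |x|^2 \leq \lambda + \kappa\}$ contains two distinct lattice points at distance at most $d$---has Lebesgue measure $o(A)$ as $A \to \infty$. A good $\lambda$ then exists in every sufficiently large window $[A, 2A]$, and letting $A \to \infty$ yields arbitrarily large good $\lambda$.

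First I parametrize bad configurations by shifts. Let $V := \{v \in \ints^2 : 0 < |v| \leq d\}$, a finite set whose cardinality depends only on $d$. A value $\lambda$ is bad precisely when some pair $(k, v) \in \ints^2 \times V$ satisfies $|k|^2, |k+v|^2 \in [\lambda, \lambda + \kappa]$, the second lattice point being $\ell := k + v$. For each fixed pair the set of bad $\lambda$'s is an interval of length at most $\kappa$, and is nonempty only when $\abs{2 k \cdot v + \abs{v}^2} \leq \kappa$ (which is simply the condition $\abs{|k+v|^2 - |k|^2}\leq \kappa$).

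Next I count admissible pairs. For $(k, v)$ to spoil any $\lambda \in [A, 2A]$, the lattice point $k$ must lie both in the annulus $A \leq |k|^2 \leq 2A + \kappa$---which has radial thickness $\Theta(\sqrt{A})$ since $\kappa = o(A)$---and in the strip $\abs{2 k \cdot v + \abs{v}^2} \leq \kappa$ of width $\Theta(\kappa/|v|)$ perpendicular to $v$. Slicing the strip by the discrete family of lines $k \cdot v = c$, $c \in \ints$, and using that the annulus meets each such line in segments of total length $O(\sqrt{A})$ with consecutive lattice points spaced $|v|/\gcd(v_1, v_2)$ apart, one obtains $O(\kappa \sqrt{A}/|v| + \sqrt{A} + \kappa)$ admissible $k$'s per $v$. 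Summing over $v \in V$, with $\sum_{v \in V} |v|^{-1} = O(d)$ by a standard area estimate, produces $O_d(\kappa \sqrt{A})$ admissible pairs in total.

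Each admissible pair contributes a bad interval of length at most $\kappa$, so the total measure of bad $\lambda \in [A, 2A]$ is $O_d(\kappa^2 \sqrt{A}) = O_{C,d}(A^{2s + 1/2})$. Because $s < \tfrac{1}{4}$ forces $2s + \tfrac{1}{2} < 1$, this measure is $o(A)$, hence strictly less than the length of $[A, 2A]$ once $A$ is sufficiently large; any good $\lambda$ in this window then satisfies $\kappa \geq C\lambda^s$, and arbitrarily large good $\lambda$ arise by letting $A \to \infty$. The main technical step is the lattice-point count on the annulus--strip intersection; the critical exponent $s = \tfrac{1}{4}$ arises precisely as the balance at which the bad measure $\kappa^2 \sqrt{A}$ becomes comparable to the window length $A$, which is also the scale at which the Bambah--Chowla upper bound $s_{n+1} - s_n \leq \beta s_n^{1/4}$ begins to force close pairs.
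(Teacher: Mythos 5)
Your argument is correct, and it takes a genuinely different route from the paper's. The paper fixes a base $\mu$, partitions a single annulus $N^\mu = \{\mu < |x|^2 \leq \mu + (J+1)\kappa\}$ (with $J = \lfloor \mu^{1/2}\rfloor$ and $\kappa = C\mu^s$) into $J+1$ congruent thin sub-annuli of width $\kappa$, shows that the lattice points which can participate in a close pair all lie in a finite union of strips $S^\mu$ whose intersection with $N^\mu$ has only $O(\mu^{2s})$ lattice points, and then pigeonholes: since $2s < 1/2$, not every one of the $\sim \mu^{1/2}$ sub-annuli can contain such a point, so one of them is good. You instead keep $\lambda$ continuous, fix $\kappa \sim A^s$ over the window $[A, 2A]$, and bound the \emph{Lebesgue measure} of the bad set: each admissible pair $(k,v)$ spoils an interval of $\lambda$'s of length $\leq \kappa$, and your slicing-by-lines count gives $O_d(\kappa\sqrt{A})$ admissible pairs, so the bad measure is $O_{C,d}(\kappa^2\sqrt{A}) = O_{C,d}(A^{2s+1/2}) = o(A)$. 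The two arguments are dual in spirit --- both reduce to lattice counting in an annulus-meets-strip region and both produce exactly the threshold $\kappa^2 < A^{1/2}$, hence $s < 1/4$ --- but the bookkeeping differs. Yours works with a much thicker ambient annulus (radial thickness $\Theta(\sqrt{A})$ covering the entire window) and charges each pair only the measure of $\lambda$'s it can spoil, whereas the paper shrinks the ambient annulus to radial thickness $\Theta(\kappa)$ and compares a lattice count directly against the number of sub-cells. One small advantage of your version is that the lattice count is carried out explicitly by slicing the strip into the discrete lines $k\cdot v = c$ (with $c \in \gcd(v_1,v_2)\,\mathbb{Z}$, the $\gcd$ canceling between the number of lines and the spacing on each line), while the paper relies on the softer assertion that ``card $\sim$ area'' for the strip--annulus region. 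You should note, as the paper does in its final step, that a good $\lambda$ produced this way satisfies $\kappa \geq C\lambda^s$ only after absorbing a bounded constant (you handled this by taking $\kappa = 2^s C A^s$, which is fine).
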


\begin{remark}
The annulus $\{x\in \mathbb R^2: \lambda \leq |x|^2 \leq \lambda + \kappa\}$ described in the theorem may contain either no lattice points, only one lattice point, or multiple lattice points. However, when there are multiple lattice points within such an annulus, it is ensured that the distance between any two lattice points is sufficiently large.
\end{remark}

The following result shows the optimality of Theorem \ref{thm2D}.

\begin{proposition} \label{prop-new}
Let $\alpha>4\sqrt{2}$. For any sufficiently large $\lambda \in \mathbb R^+$, there exist two integer lattice points with a distance of 1, belonging to the annulus $\{x\in \mathbb R^2:  \lambda \leq |x|^2 \leq \lambda +  \alpha \lambda^{1/4}\}$.
\end{proposition}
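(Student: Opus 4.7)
The plan is to look for two lattice points that differ by a single unit in one coordinate, e.g.\ $k = (m,n)$ and $\ell = (m, n+1)$, so that $|k - \ell| = 1$ holds automatically. Both points lie in the annulus precisely when
\begin{align*}
\lambda \leq m^2 + n^2 \quad \text{and} \quad m^2 + (n+1)^2 \leq \lambda + \alpha \lambda^{1/4}.
\end{align*}
Following the classical Bambah--Chowla / Shiu strategy, I would pick $m := \lfloor \sqrt{\lambda} \rfloor$, so the residual $r := \lambda - m^2$ satisfies $0 \leq r < 2m + 1 \leq 2\sqrt{\lambda} + 1$. The two inequalities above rewrite as $n \geq \sqrt{r}$ and $n + 1 \leq \sqrt{r + \alpha \lambda^{1/4}}$, reducing the problem to finding an integer inside the interval
\begin{align*}
I_\lambda := \bigl[\sqrt{r},\ \sqrt{r + \alpha \lambda^{1/4}} - 1\bigr].
\end{align*}

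Since any real interval of length at least $1$ contains an integer, it suffices to verify $\sqrt{r + \alpha \lambda^{1/4}} - \sqrt{r} \geq 2$. Isolating and squaring, this is equivalent to the explicit estimate $\alpha \lambda^{1/4} \geq 4\sqrt{r} + 4$. Substituting the bound $r < 2\sqrt{\lambda} + 1$ yields $4\sqrt{r} + 4 \leq 4\sqrt{2}\,\lambda^{1/4} + O(1)$ as $\lambda \to \infty$, so the hypothesis $\alpha > 4\sqrt{2}$ forces the required inequality for all sufficiently large $\lambda$. This produces an integer $n \in I_\lambda$ and hence the two desired adjacent lattice points inside the annulus.

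The main conceptual point---and the reason behind the constant $4\sqrt{2}$ appearing in place of the Bambah--Chowla constant $2\sqrt{2}$---is that capturing \emph{two} consecutive values $m^2 + n^2$ and $m^2 + (n+1)^2$ in the annulus (rather than a single sum of two squares) strengthens the requirement on $I_\lambda$ from ``nonempty'' to ``length at least $1$''. Equivalently, the lower bound needed on $\sqrt{r + \alpha\lambda^{1/4}} - \sqrt{r}$ doubles from $1$ to $2$, which in turn doubles the admissible leading constant in front of $\lambda^{1/4}$. Beyond this bookkeeping observation, the argument is essentially routine and I do not anticipate any other genuine obstacle.
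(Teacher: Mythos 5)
Your proposal is correct and follows essentially the same route as the paper: fix $m=\lfloor\sqrt\lambda\rfloor$, observe that it suffices to make $\sqrt{r+\alpha\lambda^{1/4}}-\sqrt r\ge 2$ where $r=\lambda-m^2<2\sqrt\lambda+1$, and verify this from $\alpha>4\sqrt2$ for large $\lambda$. The only cosmetic difference is that you square and isolate where the paper rationalizes the difference of square roots and passes to an asymptotic limit; the content is identical, including the heuristic for why the admissible constant doubles from $2\sqrt2$ to $4\sqrt2$ compared to Bambah--Chowla.
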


\begin{remark}
In Theorem \ref{thm2D}, the value $\kappa$ reflects the ``width" of the annulus where lattice points are sparsely distributed, with $\sqrt{\lambda}$ being the inner radius of the annulus. These parameters are related by the inequality $\kappa \geq C \lambda^s$ for $0<s<\frac{1}{4}$. Theorem \ref{thm2D} is sharp because, if $s$ reaches the threshold power of $\frac{1}{4}$, then the property of sparse distribution of lattice points in annuli ceases to hold. It can be seen from Proposition \ref{prop-new} that for any sufficiently large $\lambda$, an annulus $\{x\in \mathbb R^2:  \lambda \leq |x|^2 \leq \lambda +  \alpha \lambda^{1/4}\}$, with $\alpha>4\sqrt{2}$, must contain at least two integer lattice points separated by a small distance of 1. Furthermore, it is important to notice that the power $1/4$ also appears in a classical result by Bambah and Chowla \cite{Bambah}, regarding the upper bound of large gaps between sums of two squares. It fact, Bambah and Chowla's result can be stated geometrically as follows: any annular region $\{x\in \mathbb R^2:  \lambda \leq |x|^2 \leq \lambda + \beta \lambda^{1/4}\}$, where $\beta>2\sqrt{2}$, must contain at least one integer lattice point,
for any large $\lambda$. By comparison, in Bambah and Chowla's result, an annulus contains at least one lattice point, whereas, in Proposition \ref{prop-new}, an annulus contains at least two lattice points separated by a distance of 1.
\end{remark}

\vspace{0.1 in}

Our next result is about the sparse distribution of lattice points in spherical shells in $\mathbb R^3$. 

\begin{theorem} \label{thm3D}
Let $d \geq 1$. There exist arbitrarily large $m\in \mathbb R^+$ and $h \geq C_d \sqrt{\log m}$ for some constant $C_d$ depending only on $d$, such that,  
any two lattice points $k,\ell \in \mathbb Z^3$ that belong to the spherical shell 
$\{x\in \mathbb R^3: m \leq |x|^2 \leq m + h\}$ must satisfy $|k-\ell| > d$.
\end{theorem}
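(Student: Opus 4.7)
The strategy is to reduce the three-dimensional sparse-distribution problem to a family of representation problems for positive-definite binary quadratic forms, then apply a sieve in the spirit of Richards~\cite{Richards}. Fix $M$ large and set $h = C_d \sqrt{\log M}$ for a small constant $C_d > 0$ depending only on $d$; I will seek $m \in [M, 2M]$ for which the shell $\{m \leq |x|^2 \leq m+h\}$ contains no two lattice points within distance $d$. Letting $M$ grow then furnishes arbitrarily large such $m$.

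\textbf{Reduction to 2D forms.} Suppose $k, \ell \in \mathbb{Z}^3$ both lie in the shell and $|\ell - k| = |v| \leq d$. Setting $t := |\ell|^2 - |k|^2 = 2 k \cdot v + |v|^2$, one forces $|t| \leq h$ and $t \equiv |v|^2 \pmod 2$. Thus $k$ lies on the affine 2-plane $\{x \in \mathbb Z^3 : 2x \cdot v + |v|^2 = t\}$, decomposing as $k = k_\parallel + k_\perp$ with $k_\perp$ in the 2D sublattice $L_v := v^\perp \cap \mathbb Z^3$, and $|k|^2 = |k_\perp|^2 + c_{v,t}$ where $c_{v,t} = (t-|v|^2)^2/(4|v|^2)$. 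Hence the presence of such a close pair with parameters $(v,t)$ is equivalent to the shifted interval $[m - c_{v,t}, m + h - |t| - c_{v,t}]$ containing an integer represented by the positive-definite binary form $Q_v(x) := |x|^2$ on $L_v$. The task then becomes: find $m$ such that this interval misses the representation set of $Q_v$ for every admissible $(v, t)$.

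\textbf{Bad-set estimates and sieve completion.} For each fixed $v$, the Landau--Bernays theorem gives density $\Theta_v(1/\sqrt{\log M})$ for the integers in $[M, 2M]$ represented by $Q_v$, so the $(v,t)$-bad set has measure $O((h - |t|)\,M / \sqrt{\log M})$. A naive union bound over the $O(d^3 h)$ admissible pairs $(v, t)$ yields $O(d^3 h^2 M / \sqrt{\log M})$, which exceeds $M$ when $h \sim \sqrt{\log M}$ and hence cannot close the argument by itself. The refinement I would pursue is a Richards-type construction adapted to each $Q_v$: choose primes $p$ inert in the quadratic order attached to $Q_v$ (so that $p$ appearing to odd power blocks representation by $Q_v$), impose the congruence $m + j \equiv p \pmod{p^2}$ for the relevant offsets $j$, and combine these congruence conditions across all $(v, t)$ via the Chinese Remainder Theorem. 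Balancing the combined modulus $Q \leq M$ against the number of sieving conditions yields the required bound $h \geq C_d \sqrt{\log m}$, with $C_d$ emerging from the bookkeeping.

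\textbf{Main obstacle.} The central technical difficulty is extending Richards' sieve, originally tailored to $x^2 + y^2$, uniformly to the family $\{Q_v : 0 < |v| \leq d\}$, each with its own discriminant and class-group structure. One must produce primes with prescribed (inert) splitting behavior in \emph{every} associated quadratic order simultaneously, which is supplied by Chebotarev's density theorem applied to the compositum of the corresponding ring class fields, yielding a positive density of admissible primes. Controlling the combined modulus through the CRT step then determines $C_d$ explicitly in terms of $d$ and shows that the $\sqrt{\log m}$ scaling is the correct balance between the $\sqrt{\log M}$-sparsity of each $Q_v$'s representation set and the $O(d^3 h)$ count of simultaneous sieving conditions.
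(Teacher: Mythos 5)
Your proposal follows the same overall route as the paper: set $v = \ell - k$, project to reduce the 3D problem to a finite family of positive-definite binary quadratic forms (one per $v$ with $0 < |v| \leq d$), and then construct a Richards-type gap in the representation sets of all these forms simultaneously. The tool for getting primes that are ``inert for every form at once'' differs: the paper uses an elementary lemma of Mallet-Paret and Sell (pure character theory with the Kronecker symbol) to produce a single integer $r$ with $\left(\frac{d}{r}\right) = -1$ for every relevant discriminant $d$, whereas you invoke Chebotarev in the compositum of ring class fields. Both work; the paper's is much lighter machinery and is actually a stated lemma you would otherwise have to re-derive.

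There is, however, a genuine imprecision in how you account for the $\sqrt{\log m}$ scaling, and as stated the bookkeeping would not close. You treat the $O(d^3 h)$ pairs $(v,t)$ as separate sieving conditions and explain the final exponent as a ``balance'' between the $1/\sqrt{\log M}$ Landau density and that condition count. That is not the mechanism. The correct observation, which is implicit in your own reduction but never used, is that the additive shift $c_{v,t} = (t - |v|^2)^2/(4|v|^2)$ can be as large as roughly $h^2/4$ as $t$ ranges over $|t| \leq h$; so after you complete the square, the union over $t$ of the shifted target intervals is a \emph{single} interval of length on the order of $h^2$, and $t$ disappears as a separate parameter. One therefore sieves $O(h^2)$ offsets against only $O(d^3)$ discriminants, not $O(d^3 h)$ independent conditions. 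A Richards-type construction uniform over this finite family of discriminants (this is precisely the paper's Lemma \ref{thm1}) produces a common gap of length $\sim \log m$, and setting $h^2 \sim \log m$ gives $h \sim \sqrt{\log m}$. In short, the square root comes from the quadratic-in-$h$ size of the completing-the-square shift, not from a density-versus-conditions trade-off; you wrote down $c_{v,t}$ but did not make this connection, and without it the heuristic does not actually yield the stated bound.
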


\vspace{0.1 in}

The next result complements Theorem \ref{thm3D}. It provides a sufficient condition under which the sparse distribution of lattice points in spherical shells does not occur. 

\begin{proposition} \label{prop-new2}
Let $C>4\sqrt[4]{8}$. For any sufficiently large $m \in \mathbb R^+$, there exist two integer lattice points with a distance of 1, belonging to the spherical shell $\{x\in \mathbb R^3:  m \leq |x|^2 \leq m + C m^{1/8}\}$.
\end{proposition}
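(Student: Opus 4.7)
\smallskip
\noindent\textbf{Proof proposal.}
The plan is to adapt the proof strategy of Proposition~\ref{prop-new} to the three-dimensional setting by iterating the Bambah--Chowla-type construction one level deeper. The idea is to produce a lattice point $(a,b,c)\in\mathbb Z^3$ with $a^2+b^2+c^2$ just above $m$ and whose third coordinate $c$ is only of order $m^{1/8}$; then the companion point $(a,b,c+1)$, at distance $1$, will differ from it in squared norm by $2c+1 = O(m^{1/8})$, so both points land in the shell.

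First, set $a=\lfloor\sqrt{m}\rfloor$, so that $0\le \lambda:=m-a^2<2\sqrt{m}+1$. Then apply the Bambah--Chowla construction to $\lambda$: choose $b=\lfloor\sqrt{\lambda}\rfloor$, giving $0\le \lambda-b^2<2\sqrt{\lambda}+1$, and then $c=\lceil\sqrt{\lambda-b^2}\,\rceil$. By the very definition of $c$, we have $c^2\ge\lambda-b^2$, and consequently $a^2+b^2+c^2\ge m$, which secures the lower endpoint of the shell.

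Next, propagate the upper bounds through the two layers. From $\lambda<2\sqrt{m}+1$ we get $\sqrt{\lambda}\le\sqrt{2}\,m^{1/4}+O(1)$, and hence $\lambda-b^2<2\sqrt{\lambda}+1\le 2\sqrt{2}\,m^{1/4}+O(1)$. This gives
\[
c\le \sqrt{\lambda-b^2}+1 \le \sqrt{2}\,\lambda^{1/4}+O(1)\le 2^{3/4}m^{1/8}+O(1),
\]
and also
\[
a^2+b^2+c^2\le m+2\sqrt{\lambda-b^2}+1\le m+2^{7/4}m^{1/8}+O(1).
\]
Therefore $(a,b,c)$ lies in the shell provided $C\ge 2^{7/4}$ and $m$ is large.

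Finally, consider the pair $(a,b,c)$ and $(a,b,c+1)$, which have $|k-\ell|=1$. Combining the two bounds above,
\[
a^2+b^2+(c+1)^2 = a^2+b^2+c^2+2c+1\le m+2^{7/4}m^{1/8}+2\cdot 2^{3/4}m^{1/8}+O(1)= m+2^{11/4}m^{1/8}+O(1),
\]
and $2^{11/4}=4\sqrt[4]{8}$. For any $C>4\sqrt[4]{8}$, the $O(1)$ error is absorbed into $(C-4\sqrt[4]{8})m^{1/8}$ once $m$ is sufficiently large, so both $(a,b,c)$ and $(a,b,c+1)$ lie in $\{x\in\mathbb R^3:m\le|x|^2\le m+Cm^{1/8}\}$. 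There is no serious obstacle here beyond a careful bookkeeping of the constants through the two-level iteration; the sharp constant $4\sqrt[4]{8}$ arises as $2\cdot 2^{7/4}$, one factor of $\sqrt{2}$ coming from each application of the $\lfloor\sqrt{\cdot}\rfloor$/$\lceil\sqrt{\cdot}\,\rceil$ step.
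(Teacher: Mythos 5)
Your proof is correct, and it arrives at the sharp constant $4\sqrt[4]{8}=2^{11/4}$ by the same underlying mechanism as the paper: push the first two coordinates so that $a^2+b^2$ sits just below $m$ (within $O(m^{1/4})$), and then place two consecutive third coordinates $c$, $c+1$. The lower bound $a^2+b^2+c^2\ge m$ via the ceiling, the chain $\lambda<2\sqrt m+1$, $\lambda-b^2<2\sqrt\lambda+1$, $c<\sqrt{\lambda-b^2}+1$, and the final bound $a^2+b^2+(c+1)^2<m+2^{11/4}m^{1/8}+O(1)$ all check out, and the $O(1)$ terms really are bounded, so they are absorbed by $(C-2^{11/4})m^{1/8}$ for large $m$.

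Where you genuinely diverge from the paper is in how the pair $(a,b)$ is produced. The paper invokes the Bambah--Chowla theorem as a black box to obtain $s=k^2+l^2$ with $m-\beta m^{1/4}<s<m$ for some $\beta>2\sqrt2$, and then proves the gap estimate $\sqrt{m+Cm^{1/8}-s}-\sqrt{m-s}>2$, which traps two consecutive integers $n$, $n+1$. Your argument instead reproves the required special case of Bambah--Chowla inline by the two-step greedy choice $a=\lfloor\sqrt m\rfloor$, $b=\lfloor\sqrt{m-a^2}\rfloor$ (this is exactly Shiu's very short proof of Bambah--Chowla, cited in the paper), and then does the third greedy step $c=\lceil\sqrt{m-a^2-b^2}\,\rceil$ and checks $c$ and $c+1$ directly. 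What your route buys is a fully self-contained, citation-free, elementary proof; what the paper's route buys is brevity and a cleaner view of the constant, $C>4\sqrt\beta$ with $\beta>2\sqrt2$.

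One small inaccuracy in your closing heuristic: the statement that the constant arises from ``one factor of $\sqrt2$ per square-root step'' is not quite how the exponent assembles. More precisely, $c\lesssim 2^{3/4}m^{1/8}$ (the $2^{3/4}=(2\sqrt2)^{1/2}$ coming from the two nested floor steps), and $a^2+b^2+(c+1)^2-m\lesssim 4c$, giving $4\cdot 2^{3/4}=2^{11/4}$. This does not affect the correctness of the proof.
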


\begin{remark}
\noindent
\begin{enumerate}[(i)]
\item In Theorem \ref{thm3D}, the value $h$ exhibits the ``thickness" of the spherical shell where the lattice points are sparsely distributed. 
Theorem \ref{thm3D} states that $h$ has a lower bound $C_d\sqrt{\log m}$. On the other hand, Proposition \ref{prop-new2} provides that $h$ has an upper bound $C m^{1/8}$ with $C>4\sqrt[4]{8}$. At and beyond this threshold, spherical shells no longer contain sparsely distributed lattice points. 
An open problem is to find an optimal asymptotic estimate for $h$, similar to what we have obtained for the 2D case. It is also worth mentioning that the upper and lower bounds of $h$, namely, $C_d \sqrt{\log m} \leq h < C m^{1/8}$, align with the upper and lower bounds of large gaps between sums of squares specified in (\ref{bounds}).
\item  Because the gaps between sums of three squares are at most 3, there exists at least one integer lattice point belonging to the spherical shell $\{x\in \mathbb R^3:  m \leq |x|^2 \leq m + 3\}$ for any real number $m\geq 0$. This fact complements Proposition \ref{prop-new2}.
\end{enumerate}
\end{remark}

\vspace{0.2 in}

\section{Proof of Theorem \ref{thm2D} and Proposition \ref{prop-new}}

\subsection{Proof of Theorem \ref{thm2D}}
We provide a proof of Theorem \ref{thm2D}, which asserts the sparse distribution of lattice points in annuli in $\mathbb R^2$. Before presenting the proof, we introduce a notation for asymptotic equivalence: given functions $f(x)$ and $g(x)$, we denote
\begin{align}
f(x) \sim g(x) \;\; \text{to mean}  \;\;  \lim_{x\rightarrow \infty} \frac{f(x)}{g(x)} = 1.
\end{align}

\vspace{0.1 in}

\begin{proof}
We draw ideas from \cite{MS}. 

Assume the distance $d \geq 1$. Also, we fix an arbitrary constant $C>0$.

Consider the family of disjoint annuli in $\mathbb R^2$:
\begin{align*}
N_m^{\mu} = \{x\in \mathbb R^2:  \mu+ m \kappa < |x|^2 \leq \mu + (m+1) \kappa\},
\end{align*}
where $m \in \mathbb Z$ with $0\leq m \leq  J = \lfloor \mu^{1/2} \rfloor $.
We set
$$\kappa = C\mu^s, \;\;\text{where}\;\;  0<s<\frac{1}{4}.$$

We aim to show that, for sufficiently large $\mu$, there exists $m\in [0,J]$ such that $N_m^{\mu}$ does not contain a pair of lattice points with a distance less than or equal to $d$.

The union of these annuli is denoted as
\begin{align*}
N^{\mu} = \bigcup_{m=0}^{J} N_m^{\mu} =
\{x \in \mathbb R^2: \;  \mu < |x|^2 \leq \mu + (J+1)\kappa\}.
\end{align*}
Notice that $N^{\mu}$ is also an annulus.

We will estimate the thickness of the annulus $N^{\mu}$:
\begin{align}  \label{Nu1}
\text{thickness of} \,N^{\mu} &= \sqrt{\mu + (J+1)\kappa} - \sqrt{\mu}.
\end{align} 
Using $J=  \lfloor \mu^{1/2} \rfloor $ and $\kappa =   C\mu^s$ for $0<s<\frac{1}{4}$, a simple calculation shows that, as $\mu\rightarrow \infty$,
\begin{align} \label{thickN}
\text{thickness of} \,N^{\mu} \sim \frac{1}{2}  C\mu^s, \;\text{namely},\; \lim_{\mu\rightarrow \infty} \frac{\text{thickness of} \,N^{\mu} }{\frac{1}{2} C\mu^s} = 1.
\end{align}

Suppose there exist lattice points $k, \ell  \in \mathbb Z^2$ such that
\begin{align*}
k, \ell  \in N^{\mu}_m  \;\;\text{with} \;\; 0<|k - \ell| \leq  d,
\end{align*}
for some $m \in [0,J]$. Let $j= \ell-k$, then $0<|j| \leq d$. Since
\begin{align*}
|\ell|^2 = |k|^2 + 2 k \cdot j + |j|^2,
\end{align*}
then
\begin{align*}
|k \cdot j|\leq \frac{1}{2} \left| |k|^2 - |\ell|^2 \right| + \frac{1}{2} |j|^2 < \frac{1}{2}\kappa + \frac{1}{2} d^2,
\end{align*}
because $k, \ell  \in N^{\mu}_m$. Since $k$ and $\ell$ are interchangeable, we have also
$|\ell \cdot j| < \frac{1}{2} \kappa + \frac{1}{2} d^2$. Therefore, the lattice points $k$ and $\ell$ belong to the strip 
\begin{align} \label{strip}
S_j^{\mu} = \{x \in \mathbb R^2: |x \cdot j|< \frac{1}{2}\kappa + \frac{1}{2}d^2\},
\end{align}
for some $j\in \mathbb Z^2$ satisfying $0<|j| \leq d$. The strip $S_j$ is symmetric about the origin, and also symmetric about the straight line $x \cdot j =0$.

We denote $S^{\mu}$ as the finite union:
\begin{align*} 
S^{\mu} = \bigcup_{\substack{j\in \mathbb Z^2 \\ 0<|j| \leq d}} S^{\mu}_j.
\end{align*}
Note that the set $S^{\mu}$ contains all pairs of lattice points $(k,\ell)$ with distance less than or equal to $d$ belonging to an annulus $N^{\mu}_m$ for some $m \in [0,J]$.
In other words,
\begin{align} \label{Smusup}
S^{\mu} \supset \left\{  k, \ell \in \mathbb Z^2:  \,  0< |k-\ell| \leq d \;\text{with} \; k, \ell \in N^{\mu}_m \; \text{for some} \; m \in [0,J] \right\}.
\end{align}
We remark that constructing set $S^{\mu}$ is crucial for this proof.

Using (\ref{strip}), we observe that, for sufficiently large $\mu$,
\begin{align} \label{widS}
\text{the width of} \; S^{\mu}_j \leq 2\kappa = 2  C\mu^s.  
\end{align}

Also, as $\mu \rightarrow \infty$, asymptotically,
\begin{align} \label{measSN}
\text{meas}(S_j^{\mu} \cap N^{\mu}) \sim 2 (\text{width of $S^{\mu}_j$})(\text{thickness of $N^{\mu}$}).
\end{align}
Combining (\ref{thickN}), (\ref{widS}) and (\ref{measSN}), it follows that, for sufficiently large $\mu$, 
\begin{align} \label{measSN2}
\text{meas}(S_j^{\mu} \cap N^{\mu})  \leq  3C^2 \mu^{2s}.
\end{align}

Notice that the region $S_j^{\mu} \cap N^{\mu}$ is the intersection of an annulus and a strip symmetric about the origin.
As $\mu\rightarrow \infty$, both the thickness of the annulus and the width of the strip approach infinity. Therefore, as $\mu\rightarrow \infty$, the number of integer lattice points in the region $S_j^{\mu} \cap N^{\mu}$ is asymptotically equal to its area:
\begin{align} \label{cardSN}
\text{card}(S^{\mu}_j \cap N^{\mu} \cap \mathbb Z^2) \sim \text{meas} (S^{\mu}_j \cap N^{\mu}).
\end{align} 
By (\ref{measSN2}) and (\ref{cardSN}), for sufficiently large $\mu$, 
\begin{align}   \label{measSN3}  
\text{card}(S^{\mu}_j \cap N^{\mu} \cap \mathbb Z^2)  \leq 4C^2 \mu^{2s}.
\end{align}
Moreover, since $S^{\mu} = \bigcup_{\substack{j\in \mathbb Z^2 \\ 0<|j| \leq d}} S^{\mu}_j$ is a finite union, then by (\ref{measSN3}), for sufficiently large $\mu$,
 \begin{align}   \label{cont}
 \text{card}(S^{\mu} \cap N^{\mu} \cap \mathbb Z^2) \leq 16 d^2C^2 \mu^{2s}.
\end{align}

If each of the disjoint sets $S^{\mu} \cap N^{\mu}_m \cap \mathbb Z^2$ is not empty for every $m \in [0,J]$, where $J=\lfloor \mu^{1/2} \rfloor$,
then $\text{card}(S^{\mu} \cap N^{\mu} \cap \mathbb Z^2)$ will grow at least as fast as $\mu^{1/2}$ as $\mu\rightarrow \infty$, which contradicts (\ref{cont})
because $0<2s<1/2$. Therefore, for sufficiently large $\mu$, there exists $m_0\in [0,J]$ such that the set $S^{\mu} \cap N^{\mu}_{m_0} \cap \mathbb Z^2$ is empty. 
Then, we conclude from (\ref{Smusup}) that the annulus $N^{\mu}_{m_0}$ does not contain two lattice points with a distance less than or equal to $d$.
Denote $\lambda = \mu + m_0\kappa$, then 
\begin{align} \label{Nm0}
N_{m_0}^{\mu} &= \{x\in \mathbb R^2:  \mu+ m_0 \kappa < |x|^2 \leq \mu + (m_0+1) \kappa\} \notag\\
&= \{x\in \mathbb R^2:  \lambda < |x|^2 \leq \lambda + \kappa\}.
\end{align}
Notice that $\lim_{\mu \rightarrow \infty} \frac{\lambda}{\mu}=1$ and $\kappa = C \mu^s$, and therefore, 
$\lim_{\lambda \rightarrow \infty} \frac{C\lambda^s}{\kappa}=1$, where $0<s<1/4$.
Hence, we have $\kappa \geq \frac{1}{2}C \lambda^s$, for sufficiently large $\lambda$. 
Also, the half open annulus given in (\ref{Nm0}) can be easily adjusted to a closed annulus. 
\end{proof}

\vspace{0.1 in}

\subsection{Proof of Proposition \ref{prop-new}}

\begin{proof}
Let $\alpha>4\sqrt{2}$. Consider an integer $m=\lfloor \lambda^{1/2} \rfloor$.
For any sufficiently large $\lambda$, we claim that
$\sqrt{\lambda +\alpha \lambda^{1/4} - m^2} - \sqrt{\lambda - m^2} >2$.
Indeed,
\begin{align} \label{op-2}
&\sqrt{\lambda + \alpha \lambda^{1/4} - m^2} - \sqrt{\lambda - m^2} 
 = \frac{\alpha \lambda^{1/4}}{\sqrt{\lambda + \alpha \lambda^{1/4} - m^2} + \sqrt{\lambda - m^2}} \notag\\
&\geq  \frac{\alpha \lambda^{1/4}}{\sqrt{\lambda +\alpha \lambda^{1/4} - (\lambda^{1/2}-1)^2} + \sqrt{\lambda - ({\lambda^{1/2}-1})^2}} \notag\\
&=  \frac{\alpha \lambda^{1/4}}{\sqrt{\alpha \lambda^{1/4} + 2\lambda^{1/2}-1} + \sqrt{2\lambda^{1/2}-1}} \sim \frac{\alpha }{2\sqrt{2}} > 2, \;\;\text{as}\; \lambda \rightarrow \infty.
\end{align}
Thus, there exists a positive integer $n$ such that
\begin{align}
\sqrt{\lambda - m^2} \leq n < n+1 \leq   \sqrt{\lambda + \alpha \lambda^{1/4} - m^2}.
 \end{align}
It follows that the integer lattice points $(m,n)$ and $(m,n+1)$ both belong to the annulus $\{x\in \mathbb R^2:  \lambda \leq |x|^2 \leq \lambda +\alpha \lambda^{1/4}\}$.
\end{proof}

\vspace{0.2 in}

\section{Proof of Theorem \ref{thm3D} and Proposition \ref{prop-new2}}

This section is devoted to the proof of Theorem \ref{thm3D}. It is about the sparse distribution of lattice points in spherical shells in $\mathbb R^3$. Before presenting the proof, we introduce some concepts in number theory and state some lemmas.

\subsection{Definitions and lemmas} 

\noindent

Let us recall the definitions of Legendre's symbol and Kronecker's symbol. One may refer to classic books \cite{Hardy, Hua}. The notation $p\nmid m$ means that $p$ does not divide $m$.
\begin{definition}
Given an odd prime $p>0$ and an integer $m$ with $p\nmid m$. The integer $m$ is called a \emph{quadratic residue} mod $p$ if $m\equiv k^2$ (mod $p$) for some $k\in \mathbb Z$. 
If the equation $m\equiv k^2$ (mod $p$) has no solution $k$, then $m$ is called a \emph{quadratic non-residue} mod $p$. The \emph{Legendre's symbol} is defined as
\begin{align} \label{Lego}
\left( \frac{m}{p} \right) =
\begin{cases}
1    & \text{if} \; m \;\text{is a quadratic residue mod} \; p \, , \\
-1   & \text{if}  \; m \;\text{is a quadratic non-residue mod} \;  p\,.
\end{cases}
\end{align} 
\end{definition}

\vspace{0.1 in}

In the theory of quadratic forms, the discriminant $d= b^2 - 4ac$ is considered, which implies $d=0$ or 1 (mod 4). Under this scenario, we define the Kronecker's symbol as follows.
\begin{definition}
Assume $d \equiv 0$ or 1 (mod 4) and $d \not=0$. Let $n>0$ with the prime factorization $n= \prod_{j=1}^k p_j$. 
Assume gcd$(d,n) =1$. The \emph{Kronecker's symbol} $\left(\frac{d}{n} \right)$ is defined as
\begin{align} \label{Jacobi}
\left( \frac{d}{n} \right)   = \prod_{j=1}^k \left( \frac{d}{p_j} \right)  ,
\end{align}
where $\left(\frac{d}{p}\right)$ is the Legendre's symbol for odd prime $p$ with $p \nmid d$, and 
\begin{align}  \label{Jaco2}
\left( \frac{d}{2} \right) =
\begin{cases}
1    & \text{if} \; d \equiv 1 \;(\text{mod} \;8), \\
-1   & \text{if}  \; d \equiv 5\;(\text{mod} \;8).
\end{cases}
\end{align}
The Kronecker's symbol $\left(\frac{d}{n} \right)$ can be extended to negative values of $n$ by using 
$\left(\frac{d}{-m} \right) = \left(\frac{d}{-1} \right) \left(\frac{d}{m} \right)$ with $\left(\frac{d}{-1} \right) = 1$ when $d>0$;  $\left(\frac{d}{-1} \right) = -1$ when $d<0$.
\end{definition}

\vspace{0.1 in}

Kronecker's symbol has the property
\begin{align} \label{Jaco-p}
\left( \frac{d}{n_1}\right) =  \left( \frac{d}{n_2}\right),  \;\; \text{if} \;  n_1 \equiv n_2 \, ( \text{mod}  \;d) \;\;\text{and} \;\; d \equiv 0 \; \text{or} \; 1 \,(\text{mod} \; 4),
\end{align}
provided $d \not = 0$, $n_j \not = 0$, and gcd $(d, n_j)=1$, for $j=1,2$. 

\vspace{0.1 in}

Lemma \ref{lem1} and Lemma \ref{lem2} have been proved by Mallet-Paret and Sell in \cite{MS}.

\begin{lemma} \label{lem1} \emph{(Mallet-Paret and Sell \cite{MS})}
Let $D\subset \mathbb{Z}$ be a finite nonempty set of integers $d \equiv 0$ or 1 (mod 4), with the property that $\prod_{d\in A} d$ is not a perfect square whenever $%
A\subset D$ has odd cardinality. Then there exists an integer $r\not = 0$ such that
\begin{align*}
\emph{gcd}(d,r) = 1  \;\;   \text{and}  \;\;  \left( \frac{d}{r}\right) = -1
\end{align*}
for each $d\in D$.
\end{lemma}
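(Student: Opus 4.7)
The plan is to view each Kronecker symbol $\chi_d(\cdot) := \left(\frac{d}{\cdot}\right)$, for $d \in D$, as a real Dirichlet character modulo $|d|$---a legitimate reading by virtue of the periodicity property \eqref{Jaco-p}---and to extract the desired $r$ from a linear-algebra argument over $\mathbb{F}_2$.

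Set $M = \prod_{d \in D} |d|$, so that every $\chi_d$ factors through the finite abelian group $G = (\mathbb{Z}/M\mathbb{Z})^*$. Since each $\chi_d^2 = 1$, the family $\{\chi_d\}_{d\in D}$ lies inside the 2-torsion of $\hat G$, which is naturally an $\mathbb{F}_2$-vector space. By complete multiplicativity of the Kronecker symbol in the numerator, $\prod_{d\in A}\chi_d = \chi_{\prod_{d\in A} d}$ for any $A \subseteq D$, and the right-hand side is the trivial character precisely when $\prod_{d\in A} d$ is a perfect square. Hence the subspace of relations $R := \{\mathbf{1}_A \in \mathbb{F}_2^D : \prod_{d\in A}\chi_d = 1\}$ coincides with $\{\mathbf{1}_A : \prod_{d \in A} d \text{ is a perfect square}\}$.

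Now consider the evaluation map $\Phi : G \to \mathbb{F}_2^D$, $r \mapsto (\chi_d(r))_{d \in D}$, under the identification $(-1) \leftrightarrow 1$. Standard Pontryagin-type duality gives $|\operatorname{image}(\Phi)| = |G|/|\bigcap_d \ker \chi_d| = 2^{|D| - \dim R}$, matching the size of the orthogonal complement $R^\perp \subseteq \mathbb{F}_2^D$; since $\operatorname{image}(\Phi) \subseteq R^\perp$ follows directly from the definition of $R$, one in fact has equality. The target $(-1, -1, \ldots, -1)$ corresponds to the all-ones vector $\mathbf{1}_D$, and $\mathbf{1}_D \in R^\perp$ iff $|A|$ is even for every $A$ with $\prod_{d \in A} d$ a square---which is exactly the hypothesis. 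Hence there exists $r_0 \in G$ with $\chi_d(r_0) = -1$ for all $d \in D$, and any integer $r \neq 0$ with $r \equiv r_0 \pmod M$ meets the required conclusions (coprimality with each $d$ being automatic from $\gcd(r_0, M) = 1$).

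The main subtlety to guard against is the claim that $\chi_N$ is a nontrivial character on $(\mathbb{Z}/|N|\mathbb{Z})^*$ whenever $N \equiv 0, 1 \pmod 4$ is not a perfect square---without this fact, the identification of $R$ with the perfect-square relations would collapse and the duality step would yield the wrong dimension. This nontriviality is a classical consequence of quadratic reciprocity together with Dirichlet's theorem on primes in arithmetic progressions (guaranteeing a residue class in which $\chi_N$ takes value $-1$), and I would invoke it as a standard fact from \cite{Hardy, Hua}.
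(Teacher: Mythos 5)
The paper does not actually prove Lemma \ref{lem1}; it is stated and attributed to Mallet-Paret and Sell \cite{MS} with a citation in place of an argument, so there is no in-text proof against which to compare yours. Evaluated on its own, your character-theoretic proof is correct, and it is the natural framework for this statement. You reduce to $\mathbb{F}_2$-linear algebra in the $2$-torsion of the character group of $G=(\mathbb Z/M\mathbb Z)^*$, and you correctly isolate the one genuinely nontrivial input: that for $N\equiv 0$ or $1\pmod 4$ not a perfect square, the Kronecker symbol $n\mapsto\left(\frac{N}{n}\right)$ defines a \emph{nontrivial} Dirichlet character, which is exactly what makes the relation space $R$ coincide with the perfect-square relations and turns the hypothesis of the lemma into $\mathbf{1}_D\in R^\perp=\operatorname{image}(\Phi)$. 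Two small points you should spell out if this is written up in full: first, the identity $\prod_{d\in A}\chi_d=\chi_{\prod_{d\in A}d}$ on $(\mathbb Z/M\mathbb Z)^*$ uses multiplicativity of the Kronecker symbol in its numerator, which should be checked at the prime $2$ directly from (\ref{Jaco2}) and not just for odd primes; second, you implicitly view each $\chi_d$ (conductor dividing $|d|$) as a character at the common modulus $M$, and one must note that inducing a nontrivial character to a larger modulus keeps it nontrivial, so the identification of $R$ survives the passage to modulus $M$. Also, the appeal to Dirichlet's theorem is heavier than needed---primitivity of the character attached to the fundamental-discriminant part of $N$ already gives nontriviality without locating a prime---but it is not wrong. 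The duality count $|H|\cdot|H^\perp|=|G|$ for a subgroup $H$ of the dual of a finite abelian group $G$ is standard and correctly applied, and $\operatorname{image}(\Phi)=R^\perp$ then follows from containment plus equal cardinality exactly as you argue.
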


\vspace{0.1 in}

Before stating the next lemma, we introduce some notations. Let $p>0$ be a prime. We use notations $p\,|_e\,  n $ and $p\,|_o\, n$ to represent that $p$ divides $n$ an even or odd number of times, respectively.
More precisely, we write 
$$p\,|_e\, n$$
to mean either $n = p^{\alpha} m$ where $\alpha$ is even and $p \nmid m$, or else $n=0$. Note that $\alpha=0$ is permitted, so $p \, |_e \, n$ holds if $p \nmid n$. Similarly, we write 
$$p\,|_o\, n$$
to mean $n = p^{\alpha} m$, where $\alpha$ is odd and $p \nmid m$.

\vspace{0.1 in}

\begin{lemma} \label{lem2}  \emph{(Mallet-Paret and Sell \cite{MS})}
Consider a quadratic form $T(k_1,k_2)=ak_1^2 + b k_1 k_2 + c k_2^2$ with integer coefficients and discriminant $d=b^2 - 4 ac$,
and let $p$ be a prime satisfying $p\nmid d$ and $\left( \frac{d}{p}\right) = -1$. Then
\begin{align*}
p\,|_e \, T(k_1,k_2)
\end{align*}
for any $k_1$, $k_2\in \mathbb Z$.
\end{lemma}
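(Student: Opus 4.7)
The plan is to exploit the completing-the-square identity
\[
4a\, T(k_1, k_2) = (2ak_1 + bk_2)^2 - d\, k_2^2,
\]
which converts the claim about the $p$-adic valuation $v_p(T)$ into a congruence involving $d$ where I can then invoke $\left(\tfrac{d}{p}\right) = -1$. The argument bifurcates according to whether $p$ is odd or $p = 2$, mirroring the two-part definition of the Kronecker symbol. As a preliminary reduction, write $e = \gcd(k_1, k_2)$ and $k_j = e k_j'$; then $T(k_1,k_2) = e^2 T(k_1', k_2')$, so $v_p(T(k_1,k_2)) = 2 v_p(e) + v_p(T(k_1', k_2'))$. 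Hence it suffices to prove the sharper statement that $p \nmid T(k_1',k_2')$ whenever $\gcd(k_1', k_2') = 1$; the trivial case $(k_1,k_2) = (0,0)$ is handled by the convention $p\,|_e\,0$.

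For odd $p$ I would argue by contradiction. Assuming $p \mid T(k_1', k_2')$, the identity gives $(2ak_1' + bk_2')^2 \equiv d (k_2')^2 \pmod p$. When $p \nmid k_2'$, multiplying by the square of the inverse of $k_2'$ modulo $p$ presents $d$ as a nonzero square mod $p$, contradicting $\left(\tfrac{d}{p}\right) = -1$. When $p \mid k_2'$, the identity forces $p \mid 2ak_1'$; since $p$ is odd and $p \nmid k_1'$ by primitivity, we conclude $p \mid a$, and then $d \equiv b^2 \pmod p$ is again a square, yielding either $p \mid d$ (excluded by hypothesis) or $d$ as a nonzero quadratic residue mod $p$ (again contradicting $\left(\tfrac{d}{p}\right)=-1$).

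For $p = 2$, the hypothesis $\left(\tfrac{d}{2}\right) = -1$ translates to $d \equiv 5 \pmod 8$. Since $d$ is then odd, $b$ must be odd, so $b^2 \equiv 1 \pmod 8$; combined with $d = b^2 - 4ac$, this forces $ac \equiv 1 \pmod 2$, so $a$ and $c$ are both odd as well. A short parity check over the three residue classes of $(k_1', k_2') \bmod 2$ compatible with $\gcd(k_1', k_2') = 1$ (namely, both odd, or exactly one odd) then shows $T(k_1', k_2')$ is odd in every case. I expect the main obstacle to be precisely this $p=2$ step: the ad hoc mod-$8$ definition of $\left(\tfrac{d}{2}\right)$ must first be unpacked into concrete parity constraints on $a$, $b$, $c$, after which the remainder reduces to a clean parity computation that sidesteps the need for the completing-the-square identity altogether.
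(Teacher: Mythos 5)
Your proposal is correct, and it is actually more complete than what the paper records: the paper defers the odd-prime case to Mallet--Paret and Sell \cite{MS} and only supplies an argument for $p=2$ in the remark following the lemma. Your $p=2$ argument is essentially the paper's, with a small simplification: the paper deduces $a,c$ odd by observing that $d^2\equiv 9\pmod{16}$ while an even $a$ or $c$ would force $d^2\equiv 1\pmod{16}$; you deduce it directly from $d\equiv 5$, $b^2\equiv 1\pmod 8$, hence $4ac\equiv 4\pmod 8$, which is a touch cleaner. Your reduction to a primitive pair $(k_1',k_2')$ via $e=\gcd(k_1,k_2)$ replaces the paper's ``repeatedly factoring out $4$'' and streamlines both cases into the single stronger claim $p\nmid T(k_1',k_2')$.

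For the odd-prime case, which the paper does not spell out, your completing-the-square route via $4aT=(2ak_1+bk_2)^2-dk_2^2$ is sound. The only place that merits a second look is the subcase $p\mid k_2'$: there the identity gives $p\mid 4a^2(k_1')^2$, hence $p\mid a$ (using $p$ odd and $p\nmid k_1'$), and then $d\equiv b^2\pmod p$. Your phrasing ``yielding either $p\mid d$ (excluded) or $d$ a nonzero residue'' is correct but worth tightening: since $p\nmid d$ by hypothesis, necessarily $p\nmid b$, so $d$ is a nonzero square mod $p$, contradicting $\left(\tfrac{d}{p}\right)=-1$. With that minor clarification the argument is airtight.
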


\vspace{0.1 in}

\begin{remark}
In \cite{MS}, Lemma \ref{lem2} was proved for odd prime $p$. The same conclusion holds for the case $p=2$, and we briefly show the proof as follows. Let $p=2$. 
Since $\left( \frac{d}{2}\right) = -1$, then $d \equiv 5$ (mod 8) by (\ref{Jaco2}).
Then, $d^2 \equiv 9$ (mod 16). Since $2\nmid d$ and $d= b^2 - 4ac$, we obtain that $b$ is odd. Then, $a$ and $c$ must both be odd. In fact, if either $a$ or $c$ is even, 
then $d^2 = (b^2 - 4ac)^2 \equiv 1$ (mod 16), contradicting that $d^2 \equiv 9$ (mod 16). Therefore, we conclude that all of $a,b,c$ are odd, namely, coefficients of quadratic form $T(k_1,k_2)$ are all odd. 
Therefore, if $2$ divides $T(k_1,k_2)$, then 2 divides both $k_1$ and $k_2$, and thus $4$ divides $T(k_1,k_2)$. Repeatedly factoring out 4 gives that $2\,|_e \, T(k_1,k_2)$.
\end{remark}

\vspace{0.1 in}

The following lemma is motivated by the works in \cite{Richards,MS}. It's essential to emphasize that we present a logarithmic-type estimate for the length of an interval, satisfying the condition that a family of quadratic forms does not take values within the interval. It is a generalization of Richards' result in \cite{Richards} to a family of quadratic forms. This logarithmic-type estimate plays a critical role in justifying Theorem \ref{thm3D}, which concerns the thickness of spherical shells containing sparsely distributed lattice points.

\begin{lemma}
\label{thm1} Let $D\subset \mathbb{Z}$ be a finite nonempty set of integers 
$d \equiv 0$ or 1 (mod 4), with the property that $\prod_{d\in A} d$ is not a perfect square whenever $%
A\subset D$ has odd cardinality. There exist arbitrarily large $m$ and $h\geq C \log m$ for some constant $C>0$ that depends solely on $D$,
satisfying: if $T$ is any quadratic form 
\begin{align*}
T(k_1,k_2)=ak_1^2 + bk_1 k_2 + ck_2^2, \quad a, b, c\in \mathbb{Z},
\end{align*}
with discriminant $d=b^2 - 4 ac \in D$, then 
\begin{align*}
T(k_1,k_2) \not \in [m,m+h] \text{\;\;for each\;\;} k_1, k_2 \in \mathbb{Z}.
\end{align*}
\end{lemma}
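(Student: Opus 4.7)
The plan is to extend Richards' argument \cite{Richards} from the single discriminant $d=-4$ (which governs sums of two squares) to the entire family of discriminants in $D$, combining Lemma \ref{lem1} and Lemma \ref{lem2} so that every quadratic form with a discriminant in $D$ is excluded simultaneously on a long interval.

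First I would invoke Lemma \ref{lem1} to secure an integer $r\neq 0$ with $\gcd(d,r)=1$ and $\left(\frac{d}{r}\right)=-1$ for every $d\in D$. The next step is to locate a supply of primes that are ``bad'' for every discriminant in $D$ at once. Setting $M$ to be the least common multiple of $\{|d|:d\in D\}$, property (\ref{Jaco-p}) guarantees that any prime $q$ with $q\equiv r\pmod{M}$ satisfies $\left(\frac{d}{q}\right)=\left(\frac{d}{r}\right)=-1$ for every $d\in D$ (and $\gcd(q,d)=1$ automatically, since $\gcd(r,d)=1$). Because $\gcd(r,M)=1$, Dirichlet's theorem on primes in arithmetic progressions furnishes an infinite increasing sequence $q_1<q_2<q_3<\cdots$ of such primes.

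Given an integer parameter $h$, I would then apply the Chinese Remainder Theorem to the pairwise coprime moduli $q_1^2,q_2^2,\ldots,q_h^2$ to obtain a positive integer $m$ satisfying
\begin{equation*}
m+(i-1)\equiv q_i\pmod{q_i^2},\qquad i=1,2,\ldots,h.
\end{equation*}
This forces $q_i\mid (m+i-1)$ and $q_i^2\nmid (m+i-1)$, so $q_i\,|_o\,(m+i-1)$. Now fix any $d\in D$ and any quadratic form $T(k_1,k_2)=ak_1^2+bk_1k_2+ck_2^2$ with discriminant $d$. By construction $q_i\nmid d$ and $\left(\frac{d}{q_i}\right)=-1$, so Lemma \ref{lem2} yields $q_i\,|_e\,T(k_1,k_2)$ for every $k_1,k_2\in\mathbb{Z}$. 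The mismatch of parities prevents $T(k_1,k_2)=m+i-1$, and since this holds for every $i\in\{1,\ldots,h\}$, no quadratic form with discriminant in $D$ takes a value in $[m,m+h-1]$.

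It remains to extract the lower bound $h\geq C\log m$. The CRT step gives $m\leq\prod_{i=1}^h q_i^2$, so $\log m\leq 2\sum_{i=1}^h\log q_i$, and the prime number theorem for arithmetic progressions (with modulus $M$ depending only on $D$) provides Chebyshev-type control of this sum. The main obstacle will be the rate itself: the one-prime-per-position strategy sketched above only yields $h\gtrsim \log m/\log\log m$, falling short of $h\geq C\log m$. Closing this gap requires refining the construction in the spirit of Richards --- by reusing small primes $q$ in the admissible residue class modulo $M$ and choosing their covering residues $r_q$ carefully so that a single prime $q$ contributes $q\,|_o\,(m+j)$ for \emph{several} positions $j\in[0,h-1]$ at once (while preserving $q\nmid (m+j)/q$) --- thereby absorbing the $\log\log m$ loss and producing the claimed logarithmic growth with a constant $C$ depending only on $D$.
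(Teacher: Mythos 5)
Your outline correctly assembles the ingredients — Lemma~\ref{lem1} to find the witness $r$, the periodicity~(\ref{Jaco-p}) to propagate $\left(\frac{d}{\cdot}\right)=-1$ across a residue class modulo $\mathrm{lcm}\{|d|\}$, Lemma~\ref{lem2} to exclude values of the quadratic forms, and a CRT construction of $m$ — but you also correctly diagnose that your one-prime-per-position scheme only delivers $h\gtrsim \log m/\log\log m$, which falls short of the stated $h\geq C\log m$. The closing paragraph does not repair this: ``reusing small primes $q$ and choosing covering residues $r_q$ so that one $q$ serves several positions at once'' is a heuristic wish, not a construction. Specifying such a covering system while simultaneously guaranteeing $q\,|_o\,(m+j)$ (not merely $q\mid(m+j)$) for each covered $j$, and then proving the resulting modulus product is $\leq e^{O(h)}$, is precisely the hard content of the lemma, and it is missing. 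So the argument as written is incomplete at exactly the point where the logarithmic rate must be established.

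The paper's proof achieves the rate by inverting the logic. Rather than pre-assigning one Dirichlet prime per position, it examines the numbers $r+\delta j$ for $0\leq j\leq h$, where $\delta=\mathrm{lcm}\{|d|:d\in D\}$. Since $r+\delta j\equiv r\pmod{d}$, we have $\left(\frac{d}{r+\delta j}\right)=-1$ by~(\ref{Jaco-p}), and multiplicativity of the Kronecker symbol then forces \emph{some} prime $p$ in the factorization of $r+\delta j$, occurring to an odd power, to satisfy $p\nmid d$ and $\left(\frac{d}{p}\right)=-1$. This covering prime is discovered a posteriori (it may depend on both $j$ and $d$), so there is no need to budget one prime per position. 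The integer $m$ is chosen via $\delta m\equiv r\pmod{P}$, where $P=\prod p^{1+\alpha}$ ranges over all primes $p\nmid\delta$ with $p^{\alpha}\leq A<p^{1+\alpha}$ and $A=\sup_{0\leq j\leq h}|r+\delta j|$. This makes $P$ divisible by every relevant prime to a power strictly exceeding its multiplicity in any $r+\delta j$, so the $p$-adic valuation of $\delta(m+j)$ is forced to match that of $r+\delta j$; since $p\nmid\delta$, this yields $p\,|_o\,(m+j)$, and Lemma~\ref{lem2} finishes. The prime number theorem then gives $m\leq P\leq e^{4A}$ with $A\leq 2\delta h$, hence $h\geq C\log m$. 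The $\log\log m$ loss in your count disappears because no prime is ever ``used up'': the only constraint on $P$ is a single product over primes up to $A\asymp h$, not a product of $h$ distinct primes.
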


\begin{remark}  \label{remk1}
If $D$ contains only negative integers, then obviously $\prod_{d\in A} d$ is not a perfect square whenever $A\subset D$ has odd cardinality. 
\end{remark}

\smallskip

\begin{proof}
The argument adopts ideas from \cite{Richards, MS}. Thanks to Lemma \ref{lem1}, there exists $r\not=0$ such that 
\begin{equation}
\text{gcd\ }(d,r)=1\text{\ \ \ and\ \ \ }\left( \frac{d}{r}\right) =-1,\text{%
\ \ for each\ \ }d\in D.  \label{ms4}
\end{equation}%
Define 
\begin{align} \label{def-delta}
\delta :=\text{\ lcm\ }\{|d|:d\in D\}.
\end{align}
Note, (\ref{ms4}) and (\ref{def-delta}) imply that 
$\text{gcd\ }(\delta,r)=1$. Let $h>0$ and set 
\begin{equation} 
A:=\sup_{0\leq j\leq h}|r+\delta j|.  \label{ms8}
\end{equation}%
Define $P$ be the product 
\begin{equation}
P:=\prod p^{1+\alpha }  \label{ms7}
\end{equation}%
where the product is taken over all primes $p$ with 
\begin{equation} \label{ms7'}
p\nmid \delta \text{\ \ and\ \ }p^{\alpha }\leq A<p^{1+\alpha }\text{\ \ for
some integer\ \ }\alpha >0.
\end{equation}%
Because of (\ref{ms7}) and (\ref{ms7'}), we have gcd$(P ,\delta)=1$. 
Then, by Bezout's identity, there exists an integer $m\in \lbrack 1,P]$ satisfying 
\begin{equation}
\delta m\equiv r\;(\text{mod}\;P).  \label{ms6}
\end{equation}%

We argue that $h\geq C\log m$, if $h$ is sufficiently large. Indeed, the number of primes $p\leq A$ is asymptotic to $\frac{A}{%
\log A}$. Therefore, for $A$ sufficiently large, the number of primes $p\leq A$ is less than 
$\frac{2A}{\log A}$.

By (\ref{ms7}) and (\ref{ms7'}), we obtain that 
\begin{equation} \label{ms9}
P   =  \prod p^{1+\alpha } \leq   \prod p^{2\alpha} \leq    \prod A^2 \leq
A^{\frac{4A}{\log A}}=e^{4A}.
\end{equation}%

Due to (\ref{ms8}), we have, for sufficiently large $h$,
\begin{align}  \label{ms10}
A \leq |r| + \delta h \leq 2 \delta h.
\end{align}

Because of (\ref{ms9}) and (\ref{ms10}) together with the fact $m\leq P$, we conclude that, for sufficiently large $h$,
\begin{align}   \label{ms11}
m \leq e^{8 \delta h}.
\end{align}
Inequality (\ref{ms11}) can be written as
$h \geq  \frac{1}{8\delta} \log m$, for sufficiently large $h$.

We claim that $T(k_{1},k_{2})\not\in [m,m+h]$\ \ for any\ \ $k_{1},k_{2}\in \mathbb{Z}$.

Indeed, thanks to Lemma \ref{lem2}, it is sufficient to show that 
whenever $0\leq j \leq h$ and $d = b^2 - 4ac \in D$, there exists a prime number $p$ satisfying 
\begin{align} \label{claim}
p\nmid d, \;\;\;   \left( \frac{d}{p}\right) = -1, \;\; \text{and} \;\;  p\,|_o\, (m+j).
\end{align}

We take an arbitrary integer $j \in [0,h]$. Note, $r+\delta j \equiv r$ (mod $d$) due to (\ref{def-delta}). Thus, we obtain
from (\ref{Jaco-p}) and (\ref{ms4}) that
\begin{align} \label{-1}
\left(\frac{d}{r+ \delta j} \right) = \left(\frac{d}{r} \right) = -1.
\end{align}
Since gcd$(d,r)=1$ and $d$ divides $\delta$, we see that gcd$(d,r+\delta j)=1$.

We write the prime factorization for $r+ \delta j = \prod p^a $
and use (\ref{Jacobi}) to obtain that
$\left(\frac{d}{r+ \delta j} \right) = \prod \left( \frac{d}{p} \right)^a = -1$. 
It follows that there exists a prime $p\nmid d$ with $\left( \frac{d}{p}\right) = -1$ satisfying 
\begin{align} \label{po}
p\, |_o \, (r + \delta j),
\end{align}
namely, $p$ divides $r+ \delta j$ an odd number of times. 

Since $\text{gcd\ }(\delta,r)=1$ and using (\ref{po}),
we have $p\nmid \delta$. 
Then, because of (\ref{ms8}), (\ref{ms7}) and (\ref{ms7'}), we see that $p$, as a factor of $P$, occurs with a greater multiplicity than as a factor of $r + \delta j$. Moreover, by (\ref{ms6}), 
\begin{align*}
\delta (m+j)\equiv r+\delta j\;(\text{mod}\;P).
\end{align*}
As a result, $p$ divides $\delta (m+j)$ and $r+\delta j$ the same number of times. 
Then, due to (\ref{po}) and $p\nmid \delta$, it follows that $p\,|_o\, (m+j)$ as claimed in (\ref{claim}).
\end{proof}

\vspace{0.1 in}

\begin{proposition} \label{prop1}
Consider a quadratic function $T(k_1, k_2) = a k_1^2 + b k_1 k_2 + c k_2^2 + s k_1 + t k_2 + u$ for $k_1,k_2\in \mathbb Z$, 
where coefficients $a,b,c,s,t,u \in \mathbb Q$ such that $b^2 - 4ac \not = 0$. 
Then there exist $\xi_1, \xi_2, \xi_3 \in \mathbb Q$
such that $T(x_1 + \xi_1, x_2 + \xi_2) - \xi_3 = a x_1^2 + b x_1 x_2 + c x_2^2$ for all $x_1, x_2 \in \mathbb Q$.
\end{proposition}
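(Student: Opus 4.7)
The proof plan is to complete the square in two variables by translating the input. The identity $T(x_1+\xi_1, x_2+\xi_2) - \xi_3 = ax_1^2 + bx_1x_2 + cx_2^2$ demands that, after substitution and expansion, the coefficients of $x_1^2$, $x_1x_2$, $x_2^2$ remain $a$, $b$, $c$ (which happens automatically, since translation preserves the purely quadratic part), while the coefficients of $x_1$ and $x_2$ are forced to vanish, and the constant term is absorbed into $\xi_3$.

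Concretely, I will first expand
\begin{align*}
T(x_1+\xi_1, x_2+\xi_2) &= ax_1^2 + bx_1x_2 + cx_2^2 + (2a\xi_1 + b\xi_2 + s)x_1 \\
&\quad + (b\xi_1 + 2c\xi_2 + t)x_2 + (a\xi_1^2 + b\xi_1\xi_2 + c\xi_2^2 + s\xi_1 + t\xi_2 + u).
\end{align*}
To eliminate the linear terms I impose the $2\times 2$ rational linear system
\begin{align*}
2a\xi_1 + b\xi_2 &= -s, \\
b\xi_1 + 2c\xi_2 &= -t,
\end{align*}
whose determinant equals $4ac - b^2 = -(b^2 - 4ac)$. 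By hypothesis this is nonzero, so Cramer's rule produces a unique solution $(\xi_1, \xi_2) \in \mathbb{Q}^2$. I then define $\xi_3 := a\xi_1^2 + b\xi_1\xi_2 + c\xi_2^2 + s\xi_1 + t\xi_2 + u \in \mathbb{Q}$, which matches the residual constant term and yields the claimed identity for all $x_1, x_2$ (in fact for all $x_1, x_2 \in \mathbb{R}$, hence for $\mathbb{Q}$ in particular).

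There is essentially no obstacle here: the whole statement reduces to the observation that the discriminant condition $b^2 - 4ac \neq 0$ is precisely what is needed for invertibility of the Hessian-type matrix $\begin{pmatrix} 2a & b \\ b & 2c \end{pmatrix}$, allowing the translation that removes the linear part. Rationality of $\xi_1, \xi_2, \xi_3$ follows since the system has rational coefficients and rational right-hand side.
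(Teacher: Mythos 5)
Your proof is correct and follows essentially the same route as the paper: expand the translated form, kill the linear coefficients by solving the $2\times 2$ system with matrix $\begin{pmatrix} 2a & b \\ b & 2c \end{pmatrix}$ (the paper writes out the inverse explicitly; you invoke Cramer's rule, which is the same computation), and absorb the remaining constant into $\xi_3$.
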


\begin{proof}
Let us explicitly find $\xi_1, \xi_2, \xi_3 \in \mathbb Q$ such that the following equality holds for all $x_1, x_2 \in \mathbb Q$. Consider 
\begin{align*}
&T(x_1 + \xi_1, x_2 + \xi_2) - \xi_3  
= a x_1^2 + b x_1 x_2 + c x_2^2 + x_1(2 a \xi_1 + b \xi_2 + s) + x_2(2 c \xi_2 + b \xi_1 + t)  \notag\\
&+(a \xi_1^2 + b \xi_1 \xi_2 + c\xi_2^2 + s\xi_1 + t\xi_2 + u -\xi_3) = a x_1^2 + b x_1 x_2 + c x_2^2.
\end{align*}
Therefore,
\begin{align*}
\begin{cases}
2 a \xi_1 + b \xi_2 + s =0 \\
2 c \xi_2 + b \xi_1 + t = 0 \\
a \xi_1^2 + b \xi_1 \xi_2 + c\xi_2^2 + s\xi_1 + t\xi_2 + u -\xi_3 = 0\,.
\end{cases}
\end{align*}
Hence, 
\begin{align*}
\begin{pmatrix}
\xi_1  \\
\xi_2  
\end{pmatrix}
=\begin{pmatrix}
2a & b \\
b & 2c 
\end{pmatrix}^{-1}
\begin{pmatrix}
-s  \\
-t  
\end{pmatrix}
= \frac{1}{4ac - b^2} 
\begin{pmatrix}
2c & -b \\
-b & 2a 
\end{pmatrix}
\begin{pmatrix}
-s  \\
-t  
\end{pmatrix}
= \frac{1}{4ac - b^2} 
\begin{pmatrix}
-2cs + bt  \\
bs - 2 a t
\end{pmatrix}.
\end{align*}
It follows that
\begin{align} \label{xi1-2}
\xi_1 = \frac{bt - 2cs }{4ac - b^2} , \;\;\;  \xi_2 = \frac{bs - 2at}{4ac - b^2}.
\end{align}
With values of $\xi_1$ and $\xi_2$, we can find the value of $\xi_3$:
\begin{align} \label{xi3}
\xi_3 = a \xi_1^2 + b \xi_1 \xi_2 + c\xi_2^2 + s\xi_1 + t\xi_2 + u.
\end{align}
\end{proof}

\vspace{0.1 in}

\subsection{Proof of Theorem \ref{thm3D}}

\noindent

With the preparations above, now we are ready to prove Theorem 2.5. 
\begin{proof}
We adopt ideas from \cite{MS}. 
Let us fix a distance $d \geq 1$. Consider a spherical shell in $\mathbb R^3$:
\begin{align*}
N=
\{x \in \mathbb R^3: \;  m\leq  |x|^2 \leq m+h\}.
\end{align*}
Suppose there exist lattice points $k, \ell  \in \mathbb Z^3$ such that
\begin{align} \label{lessd}
k, \ell  \in N \;\;\text{with} \;\; 0<|k - \ell| \leq d.
\end{align}

Let $j= \ell-k$, then $0<|j| \leq d$. Thus, $|\ell|^2 = |k|^2 + |j|^2 + 2 k \cdot j$. It follows that
\begin{align*}
|k \cdot j|\leq \frac{1}{2} \left| |k|^2 - |\ell|^2 \right| + \frac{1}{2} |j|^2 \leq \frac{1}{2}h + \frac{1}{2} d^2,
\end{align*}
because $k,\ell \in N$.

We denote $n = k \cdot j$. Then $n\in \mathbb Z$ satisfying
\begin{align} \label{nbound}
|n| = |k \cdot j| \leq \frac{1}{2}h + \frac{1}{2} d^2.
\end{align}

Since $j=(j_1, j_2,j_3) \not = 0$, without loss of generality, we assume $j_3 \not =0$. For $k=(k_1,k_2,k_3)$, solving $n = k \cdot j= k_1 j_1 + k_2 j_2 + k_3 j_3$, we obtain
\begin{align*}
k_3 =  j_3^{-1} (n - k_1j_1 -k_2 j_2). 
\end{align*}
Thus,
\begin{align} \label{Tjn}
|k|^2 &= k_1^2 + k_2^2 + j_3^{-2} ( k_1j_1 + k_2 j_2 -n)^2  \notag\\
&=  j_3^{-2} \left[   (j_1^2 + j_3^2) k_1^2 + (2j_1 j_2 ) k_1 k_2 +     (j_2^2 + j_3^2) k_2^2    - 2n  j_1 k_1 - 2n  j_2 k_2  + n^2  )  \right] \notag\\
&=: T_{j,n}(k_1,k_2),
\end{align}
where $j\in \mathbb Z^3$, $n\in \mathbb Z$ such that $0<|j| \leq d$ and $|n| \leq \frac{1}{2}h + \frac{1}{2} d^2$.  Since $k\in N$, we know that
\begin{align}  \label{Tjnkk}
T_{j,n}(k_1,k_2) \in [m, m+h].
\end{align}

We remark that the function $T_{j,n}$ defined in (\ref{Tjn}) is a function of the form
$T_{j,n}(k_1,k_2) = ak_1^2 + b k_1 k_2 + ck_2^2 + s k_1 + t k_2 + u$
where coefficients $a,b,c,s,t,u\in \mathbb Q$ and $b^2 - 4ac <0$. Thanks to Proposition \ref{prop1}, there exist rational numbers $\xi_1$, $\xi_2$ and $\xi_3$ such that
\begin{align} \label{Txx}
T_{j,n} (x_1 + \xi_1, x_2+\xi_2) - \xi_3 =  j_3^{-2}   (j_1^2 + j_3^2) x_1^2 + (2 j_3^{-2}    j_1 j_2   ) x_1 x_2 +  j_3^{-2}    (j_2^2 + j_3^2) x_2^2,
\end{align}
for all rational numbers $x_1$ and $x_2$. Moreover, due to (\ref{xi1-2}) and (\ref{xi3}) and straightforward calculation, we obtain
\begin{align} \label{xixixi}
\xi_1 = \frac{n j_1}{|j|^2}, \;\;\; \xi_2 = \frac{n j_2}{|j|^2}, \;\;\;  \xi_3 = \frac{n^2}{|j|^2}.
\end{align}

Without loss of generality, we assume $d$ is an integer. We set 
\begin{align} \label{lcm}
\beta = \text{lcm}\{1^2, 2^2, 3^3, \cdots, d^2\}.
\end{align}

Consider arguments of the form $x_1 = \frac{i_1}{\beta}$ and $x_2 = \frac{i_2}{\beta}$. Then, by (\ref{Txx}), we obtain
\begin{align} \label{ae2}
&\left[T_{j,n} \Big(\frac{i_1}{\beta} + \xi_1, \frac{i_2}{\beta}+\xi_2\Big) - \xi_3\right] \beta^3 \notag\\
&= \beta j_3^{-2}(j_1^2 + j_3^2) i_1^2 + (2\beta j_3^{-2} j_1 j_2 ) i_1 i_2 + \beta j_3^{-2}(j_2^2 + j_3^2) i_2^2  \notag\\
&=: \tilde T_{j}(i_1, i_2).
\end{align}
Because of (\ref{lcm}), $\frac{i_1}{\beta} + \xi_1$ and $\frac{i_2}{\beta}+\xi_2$ can take any integer values by adjusting $i_1$ and $i_2$.
Also due to (\ref{lcm}) and (\ref{ae2}), $\tilde T_{j}(i_1, i_2)$ is a quadratic form with integer coefficients and negative discriminant. Note, $j$ belongs to a finite set. Thanks to Lemma \ref{thm1} and Remark \ref{remk1}, there exist arbitrary large $m_0$ and $h_0 \geq C \log m_0$ such that
\begin{align} \label{Ttilde}
\tilde T_{j}(i_1, i_2) \not \in [m_0, \, m_0+h_0], \;\text{for any} \;i_1,i_2 \in \mathbb Z, \; \text{and for any}\; j\in \mathbb Z^3 \; \text{with} \; 0<|j|\leq d.
\end{align}

For sufficiently large $h_0$, we can find $h>0$ satisfying
\begin{align} \label{h0}
&h_0 = \left(h + \frac{1}{4}(h + d^2)^2\right) \beta^3.
\end{align}
Then, we set
\begin{align} \label{defm}
m = \frac{m_0}{\beta^3} + \frac{1}{4}(h + d^2)^2. 
\end{align}

Due to (\ref{xixixi}) and (\ref{nbound}), we have
\begin{align}  \label{xi3b}
 \xi_3 = \frac{n^2}{|j|^2} \leq  \frac{1}{4}(h + d^2)^2.
  \end{align}

Using (\ref{ae2})-(\ref{xi3b}), we obtain
\begin{align} \label{Tmm}
T_{j,n} \Big(\frac{i_1}{\beta} + \xi_1, \frac{i_2}{\beta}+\xi_2\Big) 
\not \in [m - \frac{1}{4}(h + d^2)^2 + \xi_3, \, m+h + \xi_3] \supset [m, m+h],
\end{align}
for any $i_1,i_2 \in \mathbb Z$. In particular, there exist $i_1,i_2 \in \mathbb Z$ such that $k_1=\frac{i_1}{\beta} + \xi_1$ and $k_2=\frac{i_2}{\beta} + \xi_2$,
and thus (\ref{Tmm}) shows that $T_{j,n}(k_1, k_2) \not \in  [m, m+h]$, which contradicts (\ref{Tjnkk}). 
Therefore, for these pairs of $m$ and $h$, (\ref{lessd}) cannot happen. 
Thus, for any one of these pairs of $m$ and $h$,
if there exist two lattice points $k,\ell \in \mathbb Z^3$ that belong to the spherical shell 
$\{x\in \mathbb R^3: m \leq |x|^2 \leq m + h\}$, then $|k-\ell| > d$.

Recall that we fix $d$ at the beginning. Then, due to (\ref{h0}) and (\ref{defm}), asymptotically, 
\begin{align*}
h_0 \sim \frac{1}{4} \beta^3 h^2,  \;\;\;  m \sim \frac{1}{\beta^3} m_0.
\end{align*}
Thus, along with the fact that $h_0 \geq C \log m_0$, we conclude that
\begin{align*}
h \geq \tilde C \sqrt{\log m},
\end{align*}
for sufficiently large $m$, where the constant $\tilde C$ depends on $d$.
\end{proof}

\vspace{0.1 in}

\subsection{Proof of Proposition \ref{prop-new2}}
\begin{proof}
Let $C>4\sqrt[4]{8}$. For any sufficiently large real number $m>0$, 
it is easy to verify that
$\sqrt{m + C m^{1/8} - s} - \sqrt{m - s} > 2$,
for any $s\in \mathbb N$ satisfying $m- \beta m^{1/4}<  s < m$, where $2\sqrt{2} < \beta < \frac{C^2}{16}$.
Therefore, there exists a positive integer $n$ such that 
\begin{align}
\sqrt{m - s} \leq n < n+1 \leq \sqrt{m + C m^{1/8} - s}.
 \end{align}

By Bambah and Chowla \cite{Bambah}, for any large $m\in \mathbb R$, there exists $s=k^2 + l^2$, $k,l\in \mathbb Z$, such that $m- \beta m^{1/4}<  s < m$,
since $\beta>2\sqrt{2}$. Consequently, the spherical shell $\{x\in \mathbb R^3: m\leq |x|^2 \leq m +C m^{1/8}\}$ contains lattice points $(k,l,n)$ and $(k,l,n+1)$. 
\end{proof}

\vspace{0.1 in}

\subsection{Remarks}
Theorem \ref{thm3D} shows the existence of spherical shells
$\{x\in \mathbb R^3: m \leq |x|^2 \leq m + h\}$ in which lattice points are sparsely distributed. Here, $m$ can be arbitrarily large, and $h\geq C\sqrt{\log m}$.
The optimality of the order $\sqrt{\log m}$ is unknown, but it is closely connected to the logarithmic size of large gaps between sums of two squares due to 
Richards \cite{Richards}.
The proof of Theorem \ref{thm3D} relies on Lemma \ref{thm1}, which extends Richards' result to a family of quadratic forms. An important element of the proof is the prime number theorem.

Suppose Theorem \ref{thm3D} holds true for a higher order $h=h(m)$ with $\frac{h(m)}{\sqrt{\log m}} \rightarrow \infty$ as $m\rightarrow \infty$. Let's take $d=1$.
If $m$ is sufficiently large, then $\sqrt{m + h - s} - \sqrt{m - s} >2$,
for any $s \in \mathbb N$ satisfying $m-ch^2 < s < m$, where $0<c< \frac{1}{16}$. 
Thus, there exists a positive integer $n$ such that $\sqrt{m - s} \leq n<n+1\leq \sqrt{m + h - s}$.
If $s=k^2 + l^2$, then the spherical shell $\{x\in \mathbb R^3: m\leq |x|^2 \leq m +h\}$ contains lattice points $(k,l,n)$ and $(k,l,n+1)$. 
However, according to our assumption, there exist arbitrarily large values of $m$ such that any two lattice points in the spherical shell $\{x\in \mathbb R^3: m\leq |x|^2 \leq m +h\}$ must be separated by a distance strictly greater than $d=1$. 
Therefore, for these values of $m$, any $s \in (m-ch^2, m)$ cannot be expressed as a sum of two squares. This would improve the logarithmic size of large gaps between sums of squares, since we assume $\frac{h(m)}{\sqrt{\log m}} \rightarrow \infty$ as $m\rightarrow \infty$. However, as of now, there have been no advancements that surpass the logarithmic order discovered by Richards in \cite{Richards}.

\vspace{0.2 in}

\section{Discussion}
In this section, we discuss some related problems and future work.

The original motivation of this project was to find an optimal estimate for large gaps between sums of two squares, and this remains its long-term goal. It is an important open problem whether one can improve the logarithmic growth rate of the lower bound for large gaps between sums of squares presented in (\ref{bounds}).
Also, it is interesting to ask whether one can reduce the polynomial growth rate (the power of 1/4) of the upper bound in (\ref{bounds}).

We would like to draw some naive comparisons with a related problem: gaps between primes. It is well-known that the number of primes less than $x$ is approximately $\frac{x}{\log x}$, whereas the number of sums of squares less than $x$ behaves asymptotically as $\frac{bx}{\sqrt{\log x}}$, where $b$ is the Landau–Ramanujan constant. Therefore, there are fewer primes than sums of squares in $[0,x]$ for large $x$. Thus, on average, sums of squares are distributed more densely than primes throughout the natural numbers. It is a classical result of Westzynthius \cite {West} that
\begin{align} \label{gapprime}
\limsup_{n\rightarrow \infty} \frac{p_{n+1}-p_n}{\log p_n} = \infty,
\end{align}
 where $p_n$ is the sequence of primes. By comparing (\ref{gapprime}) and (\ref{loggap}),
we see that large gaps between primes might grow faster than large gaps between sums of squares asymptotically.
Also, it is worth mentioning that gaps between primes have an upper bound 
\begin{align} \label{primeupp}
p_{n+1} - p_n \leq p_n^{\theta},
\end{align}
with an estimate $\theta=0.525$, for sufficiently large $n$ (see \cite{Baker}). One can compare $\theta=0.525$ in (\ref{primeupp}) with the exponent $1/4$ in (\ref{bounds}) concerning the upper bound of gaps between sums of two squares. 
Furthermore, we would like to mention the twin prime conjecture regarding small gaps between primes; however, small gaps between sums of squares are always 1, which is trivial. 
All of the above observations show that, in general, sums of two squares appear more frequently than primes within the set of positive integers.
See \cite{Shiu} for more discussion on this topic.

Regarding the sparse distribution of lattice points in annuli in $\mathbb R^2$, we have already achieved an optimal asymptotic result concerning the width of these annuli in this paper. However, in three dimensions, the optimality of our estimates remains unknown, making it interesting to explore the possibility of refining the thickness of these spherical shells provided in Theorem \ref{thm3D}. This inquiry is closely related to the study of large gaps between sums of squares. Furthermore, our proofs contain deep geometric perspectives that can be explored further to generate other useful results. Most importantly, our findings on the sparse distribution of lattice points in annuli have the potential for applications in simplifying infinite-dimensional dissipative dynamical systems to finite-dimensional counterparts, particularly in solving the inertial manifold problem for the Navier-Stokes equations. For an example of such applications, please refer to \cite{MS}.

Sums of squares are eigenvalues of the Laplacian on a periodic domain. Likewise, we can consider gaps between eigenvalues of the Dirichlet Laplacian on a bounded domain in $\mathbb{R}^n$, or more generally, on an $n$-dimensional Riemannian manifold. In these cases, explicit expressions of eigenvalues are usually not available. An important problem is to find sharp estimates for the upper and lower bounds of the size of the gaps between eigenvalues of the Dirichlet Laplacian. Please refer to \cite{Chen} for an estimate of the upper bound.

\vspace{0.2 in}

\noindent {\bf Acknowledgments.} We extend our deep gratitude to the referee for offering insightful suggestions that have substantially improved the quality of our paper. In particular, Propositions 2.3 and 2.6 were developed based on the comments from the referee.

The research of this project was completed during the Summer 2023 REU program 
``AMRPU @ FIU" that took place at the Department of Mathematics and Statistics, Florida International University, 
and was supported by the NSF (REU Site) grant DMS-2050971. In particular, support of M. Ilyin came from the above grant.

\vspace{0.2 in}


\begin{thebibliography}{10}






\bibitem{Baker} R. C. Baker, G. Harman, and J. Pintz, \emph{The difference between consecutive primes. II.} Proc. London Math. Soc. (3) 83 (2001), 532--562.


\bibitem{Bambah} R. P. Bambah and S. Chowla,  \emph{On numbers which can be expressed as a sum of two squares}, Proc. Nat. Inst. Sci. India 13 (1947), 101--103.


\bibitem{Chen}
D. Chen, T. Zheng, and H. Yang, \emph{Estimates of the gaps between consecutive eigenvalues of Laplacian}, Pacific J. Math. 282 (2016), 293--311.

\bibitem{Dietmann}
R. Dietmann, C. Elsholtz, A. Kalmynin, S. Konyagin, and J. Maynard,  \emph{Longer gaps between values of binary quadratic forms},
Int. Math. Res. Not. IMRN (2023), 10313--10349.


\bibitem{Erdos}
P. Erd\"os, \emph{Some problems in elementary number theory}, Publ. Math. Debrecen 2 (1951), 103--109.


\bibitem{Gross}
E. Grosswald, \emph{Representations of integers as sums of squares},
Springer-Verlag, New York, 1985.


\bibitem{Hardy} G. H. Hardy and E. M. Wright, \emph{A introduction to the theory of numbers}, The Clarendon Press, Oxford University Press, 1979.

\bibitem{Hua}
L. K. Hua,  \emph{Introduction to number theory}, Springer-Verlag, 1982.


\bibitem{MS}
J. Mallet-Paret and G. R. Sell, \emph{Inertial manifolds for reaction diffusion equations in higher space dimensions}, J. Amer. Math. Soc. 1 (1988), 805--866.


\bibitem{Richards} I. Richards, \emph{On the gaps between numbers which are sums of two squares}, Advances in Mathematics 46 (1982), 1--2.



\bibitem{Shiu} P. Shiu, \emph{The gaps between sums of two squares}, Math. Gaz. 97 (2013), 256 -- 262.


\bibitem{West} E. Westzynthius, \emph{\"Uber die Verteilung der Zahlen die zu den n ersten Primzahlen teilerfremd sind}, Commentationes Physico-Mathematicae Helsingsfors 5 (1931), 1--37.





\end{thebibliography}
\end{document}